\newcommand\dx{{\,dx}}
\newcommand\G{\mathcal{G}}
\newcommand\K{\mathcal{K}}
\newcommand\R{\mathbb{R}}
\newcommand\N{\mathbb{N}}
\newtheorem{theorem}{Theorem}[section]
\newtheorem{proposition}[theorem]{Proposition}
\newtheorem{lemma}[theorem]{Lemma}
\theoremstyle{remark}
\newtheorem{remark}[theorem]{Remark}
\theoremstyle{definition}
\newtheorem{definition}[theorem]{Definition}
\tikzstyle{nodo}=[circle,draw,fill,inner sep=0pt,minimum size=.1cm]
\tikzstyle{infinito}=[circle,inner sep=0pt,minimum size=0mm]
\date{}
\title{Bound states of the NLS equation on metric graphs\\with localized nonlinearities}
\author{Enrico Serra, Lorenzo Tentarelli
\\ \ \\
{\small  Dipartimento di Scienze Matematiche ``G.L. Lagrange'', Politecnico di Torino } \\
{\small Corso Duca degli Abruzzi, 24, 10129 Torino, Italy} \\
{\small \texttt{enrico.serra@polito.it}, \texttt{lorenzo.tentarelli@polito.it}}}
\begin{document}

\maketitle

\begin{abstract}
We investigate the existence of multiple bound states of prescribed mass for the nonlinear Schr\"odinger equation on a noncompact metric graph. The main feature is that the nonlinearity is localized only in a compact part of the graph.
Our main result states that for every integer $k$, the equation possesses at least $k$ solutions of prescribed mass, provided that the mass is large enough. These solutions arise as constrained critical points of the NLS energy functional. Estimates for the energy of the solutions are also established.
\end{abstract}

\noindent{\small AMS Subject Classification: 35R02, 35Q55, 81Q35, 49J35, 58E05, 46T05}
\smallskip

\noindent{\small Keywords: Minimax methods, metric graphs, nonlinear Schr\"odinger Equation, localized nonlinearity, Krasnosel'skii genus}

\section{Introduction}

In this paper we discuss the existence of multiple bound states of {\em prescribed mass} for the NLS equation on metric graphs with localized nonlinearities.

Here we limit ourselves to a rather informal description of the results: the precise setting of the problem with all the required definitions will be given in detail in Section \ref{set_res}.

We consider a \emph{noncompact connected metric graph}, that is, a connected graph  $\G=(\mathrm{V},\mathrm{E})$ where each edge is associated with a (possibly unbounded) interval of the real line. 
The subgraph $\K$ of $\G$ consisting of all the bounded edges  is called  the {\em compact core} of $\G$ (see Definition \ref{cc}). In Figure \ref{figuno} is depicted a typical metric graph.

Given a number $\mu> 0$, a bound state of {\em mass} $\mu$ for the NLS equation on $\G$, with nonlinearity localized in the compact core,  is a function $u\in H^1(\G)$ satisfying
\begin{equation}
\label{bsmass}
\|u\|_{L^2(\G)}^2 = \mu
\end{equation}
and that solves, for a suitable $\lambda \in \R$, the stationary NLS equation
\begin{equation}
\label{bseq}
u''+\kappa(x)|u|^{p-2}\,u = \lambda u
\end{equation}
on every edge of $\G$ with homogeneous Kirchhoff conditions at the vertices of the compact core $\K$. The function $\kappa$ is the characteristic function of the compact core (this explains the term {\em localized} referred to the nonlinearity). 
 
Many different boundary conditions can be imposed at the vertices of the compact core, giving rise to different models (see e.g. \cite{berkolaiko}). In this paper we choose the Kirchhoff conditions, since they are the most commonly used in the applications. See Definition \ref{defbs} for the precise statement.

Bound states satisfy therefore a  {\em double regime}: linear in the unbounded edges and nonlinear in the compact core. 

In this paper we only treat nonlinearities with {\em subcritical} growth, namely, we always assume that $2<p<6$.

\begin{figure}[ht]
\begin{center}
\begin{tikzpicture}[xscale= 0.7,yscale=0.7]
\node at (-.5,2) [nodo] (02) {};
\node at (2,2) [nodo] (22) {};
\node at (2,4) [nodo] (24) {};
\node at (3.6,1.6) [nodo] (42) {};
\node at (3,3) [nodo] (33) {};
\node at (5,2) [nodo] (52) {};
\node at (3,3) [nodo] (32) {};
\node at (4,3) [nodo] (43) {};
\node at (-1,4) [nodo] (04) {};
\node at (2,4) [nodo] (24) {};
\node at (.5,1) [nodo] (11) {};
\node at (2,0) [nodo] (20) {};
\node at (4,0) [nodo] (40) {};
\node at (-4,2) [minimum size=0pt] (meno) {};
\node at (-4,4) [minimum size=0pt] (menoalt) {};
\node at (9,2) [minimum size=0pt] (piu) {};
\node at (-4.1,2) [infinito]  (infmeno) {$\infty$};
\node at (-4.1,4) [infinito]  (infmenoalt) {$\infty$};
\node at (9.1,2) [infinito]  (infpiu) {$\infty$};
\draw[-] (02)--(04);
\draw[-] (04)--(24);
\draw[-] (04)--(22);
\draw[-] (24)--(22);
\draw[-] (02)--(11);
\draw[-] (11)--(22);
\draw[-] (11)--(20);
\draw[-] (20)--(22);
\draw[-] (22)--(33);
\draw[-] (24)--(33);
\draw[-] (24)--(43);
\draw[-] (33)--(43);
\draw[-] (43)--(52);
\draw[-] (33)--(42);
\draw[-] (20)--(42);
\draw[-] (20)--(40);
\draw[-] (40)--(42);
\draw[-] (42)--(52);
\draw[-] (40)--(52);
\draw[-] (02)--(meno);
\draw[-] (52)--(piu);
\draw[-] (04)--(menoalt);
\end{tikzpicture}
\end{center}
\caption{\footnotesize{{a metric graph with 3 half--lines and 19 bounded edges. The bounded edges form the {\em compact core} of the graph, where the nonlinearity is localized.}
 }}
\label{figuno}
\end{figure}
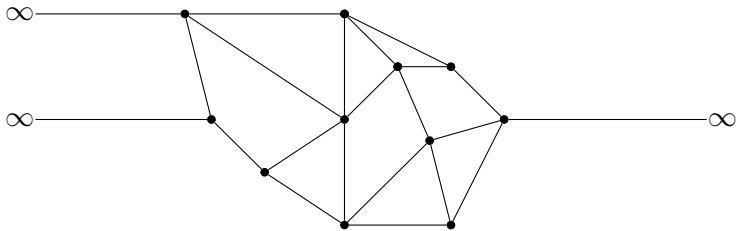

The existence of solutions with prescribed mass to stationary NLS type equations is a classical subject, both in dimension one and in higher dimensions, and the literature is huge (see e.g \cite{cazenave,caz-lio,suldue} and references therein).
From a variational point of view, bound states of prescribed mass (i.e. the solutions of \eqref{bsmass}--\eqref{bseq}) can be found as critical points of the energy functional
\begin{equation}
\label{functe}
E(u) = \frac12\|u'\|_{L^2(\G)}^2 - \frac1p\|u\|_{L^p(\K)}^p = \frac12\int_\G |u'|^2\dx - \frac1p\int_\K |u|^p\dx,
\end{equation}
constrained on the manifold
\begin{equation}
\label{vinc}
M= \{u\in H^1(\G) : \|u\|_{L^2(\G)}^2 = \mu\}.
\end{equation}

The principal obstacle that one has to face when looking for critical points of functionals like $E$ is given by the lack of compactness caused by the unboundedness of the domain. As is well known, this may result in the failure of convergence of minimizing or Palais--Smale sequences for $E$. 
Typical devices used to gain compactness, such as restricting to functions enjoying some kind of symmetry, do not work on general non symmetric  graphs like ours. 

A further difficulty, typical of problems on graphs, is the impossibility of obtaining solutions of prescribed mass by {\em scaling}. For instance, 
when the spatial domain is $\R$ or $\R^n$, a possible approach to find (multiple) solutions consists in setting up  suitable min--max procedures and then  obtaining solutions of prescribed mass by scaling critical points  to adjust their $L^2$ norm. In these cases one may even expect infinitely many solutions of prescribed mass (see e.g. \cite{bariola,bl2}).
In order to carry out this program,
it is of course necessary that the domain be invariant under homotheties. In our case however the presence of bounded edges of fixed length prevents this possibility.
A striking example of this phenomenon can be found in \cite{T}, where it is shown that when $p\in [4,6)$, the functional \eqref{functe} does not even  admit   a global {\em  minimum} on $M$ for small values of the mass $\mu$.

The scheme of the procedure we use to find multiple bound states can be roughly summarized as follows. We consider the restriction $E_M$ of the functional $E$ to the manifold $M$ defined in \eqref{vinc}. First we prove that $E_M$ satisfies the Palais--Smale condition at a level $c$ if and only if $c<0$ (Section \ref{palais}). Then we construct, under suitable assumptions on $\mu$, multiple min-max classes working at negative levels (Section \ref{proofs}). The construction of these classes takes advantage of the evenness of $E_M$ and is based on the Krasnosel'skii genus. More precisely, we show that for every $k\in \N$,   some {\em negative} sublevels of $E_M$ 
have genus at least $k$ provided the mass  $\mu$ is large enough. This yields the existence of at least $k$ (pairs of) critical points for $E_M$, by standard results in Critical Point Theory.

A fundamental role in the construction of the min--max classes is played by the properties of the {\em solitons} $\varphi_\mu$, namely, the minimizers of the functional
\begin{equation}
\label{fusol}
\mathcal{E}(u) = \frac12\int_{\R}|u'|^2\dx - \frac1p\int_{\R}|u|^p\dx, \qquad p\in(2,6),
\end{equation}
over the set of  $u\in H^1(\R)$ such that $\|u\|_{L^2(\R)}^2 = \mu$ (Section \ref{solitons}). These will be used, after being appropriately scaled, cut off and placed in the compact core of $\G$, to show that some negative sublevel sets of $E_M$ have the required genus.

\begin{remark} The topological argument that we use (detection of sublevels of high genus) works also when the nonlinearity is present on the half--lines. In this case however it can be proved that  the Palais--Smale condition fails at infinitely many negative levels. Thus any variational argument for the everywhere nonlinear problem must avoid the  ``bad levels''. This seems to be out of reach at the moment.   
\end{remark}

Our main result is the following.

\begin{theorem}
\label{main}
Let $\G$ be a noncompact connected metric graph with nonempty compact core, and let $p\in(2,6)$. For every $k\in\N$, there exists ${\mu_k}>0$ such that for all $\mu \ge {\mu_k}$ there exist at least $k$ distinct pairs $(\pm u_j)$ of bound states of mass $\mu$. Moreover, for every $j=1, \dots, k$, 
\begin{equation}
\label{bslevel}
E_M(u_j) \le \frac1{j^{ 2\beta}}\,\mathcal{E}(\varphi_\mu) + \sigma_k(\mu) < 0, \qquad \beta = \frac{p-2}{6-p},
\end{equation}
where $\sigma_k(\mu) \to 0$ (exponentially fast) as $\mu \to \infty$. Finally, for each $j$, the Lagrange multiplier $\lambda_j$ relative to $u_j$ is positive.
\end{theorem}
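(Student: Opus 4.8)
The plan is to find the $k$ bound states as constrained critical points of the even functional $E_M$ via the Krasnosel'skii genus, exploiting the two facts already advertised in the introduction: the Palais--Smale condition holds precisely at negative levels, and the solitons $\varphi_\mu$ provide explicit low-energy test functions. The central quantitative task is to show that, for $\mu$ large, the sublevel set $\{u \in M : E_M(u) \le -c\}$ for suitable $c>0$ has genus at least $k$. By the standard minimax theory for even functionals satisfying $(PS)$ below level $0$, a set of genus $\ge k$ at a negative level produces $k$ distinct pairs $(\pm u_j)$ of critical points, with the $j$-th minimax level $c_j$ given by the infimum over sets of genus $\ge j$ of the supremum of $E_M$.

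First I would construct the genus-$k$ test family. The idea is to take $j$ scaled copies of the soliton $\varphi_\mu$, cut them off so that they are supported in disjoint bounded edges of the compact core $\K$, and then form odd linear combinations parametrized by the unit sphere $S^{k-1}$. Concretely, fixing $k$ disjoint arcs inside $\K$, I place on each a rescaled soliton; a point $(a_1,\dots,a_k)\in S^{k-1}$ gives the function $\sum_i a_i$ (scaled soliton on arc $i$), suitably renormalized to have mass exactly $\mu$. Because the supports are disjoint, the $L^2$ and $L^p$ norms decouple, and the map from $S^{k-1}$ into $M$ is odd and continuous, so its image has genus $k$. The energy of such a function is governed by the soliton energies on the individual pieces; the factor $1/j^{2\beta}$ in \eqref{bslevel} with $\beta=(p-2)/(6-p)$ reflects exactly how the soliton energy $\mathcal{E}(\varphi_\mu)$ scales with mass when $\mu$ is split into $j$ comparable parts (recall $\mathcal{E}(\varphi_\mu) \sim -\mu^{2\beta+1}$ up to constants). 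The term $\sigma_k(\mu)$ collects the error incurred by the cutoff, which I expect to decay exponentially because solitons decay exponentially, so truncation costs an exponentially small amount of energy.

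The main obstacle, and the place where I would spend the most care, is the energy estimate \eqref{bslevel}: I must verify that the cutoff solitons placed inside the \emph{finite} edges of $\K$ still realize an energy close to the exact real-line soliton energy, controlling the cutoff error uniformly and showing it is $\sigma_k(\mu)\to 0$ exponentially. This requires matching the scaling of the soliton (whose natural width shrinks as $\mu$ grows, for $p$ in the subcritical range) against the \emph{fixed} lengths of the edges of $\K$, and it is precisely the tension between the rescaling and the rigidity of the graph — the same obstruction that, as the introduction notes, blocks pure scaling arguments — that forces the hypothesis that $\mu$ be large. For large $\mu$ the rescaled solitons are narrow enough to fit inside the arcs with exponentially small tails, which is what makes the construction succeed. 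Once the genus-$k$ sublevel is in hand at a strictly negative level, the abstract deformation lemma together with the $(PS)$ condition below $0$ yields the $k$ critical pairs, and the minimax characterization delivers the upper bounds $c_j \le j^{-2\beta}\mathcal{E}(\varphi_\mu)+\sigma_k(\mu)$.

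Finally, for the sign of the Lagrange multiplier, I would argue directly from the equation. Each critical point $u_j$ satisfies \eqref{bseq} for some $\lambda_j$; testing the weak form against $u_j$ itself gives the identity $\|u_j'\|_{L^2(\G)}^2 - \|u_j\|_{L^p(\K)}^p = \lambda_j\|u_j\|_{L^2(\G)}^2 = \lambda_j\mu$. Since $E_M(u_j)<0$ means $\frac12\|u_j'\|_{L^2(\G)}^2 < \frac1p\|u_j\|_{L^p(\K)}^p$, combining this with the testing identity yields $\lambda_j\mu = \|u_j'\|_{L^2(\G)}^2 - \|u_j\|_{L^p(\K)}^p < \|u_j'\|_{L^2(\G)}^2 - \frac{p}{2}\|u_j'\|_{L^2(\G)}^2 = (1-\tfrac{p}{2})\|u_j'\|_{L^2(\G)}^2$, and since $p>2$ this is negative unless $u_j'\equiv 0$; excluding the trivial case (the energy is strictly negative, so $u_j\not\equiv 0$ and the nonlinear term is genuinely active), a short computation combining the two relations forces $\lambda_j>0$. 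I expect this last point to be the easiest, following from the negativity of the energy level together with the Nehari-type identity obtained by testing the equation against the solution.
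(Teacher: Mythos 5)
Your strategy is the paper's: genus-based minimax classes $\Gamma_j$, the Palais--Smale condition holding exactly at negative levels, and a compact symmetric test set built from cutoff solitons with disjoint supports inside the compact core, mapped oddly from $S^{k-1}$ into $M$. Two points deserve comment, one of which is a genuine error. First, the step you gloss over with ``the energy of such a function is governed by the soliton energies on the individual pieces'' is precisely where the work lies: for a generic $\theta\in S^{k-1}$ the function is \emph{not} a superposition of solitons with well-distributed masses (at $\theta=e_1$ it is a single bump whose amplitude is inflated by $\sqrt{k}$ relative to a mass-$\mu/k$ profile), so uniform negativity of the energy over the whole sphere is not automatic. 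The paper resolves this by taking blocks $\psi_j$ of mass $\mu/k$ and setting $h(\theta)=\sqrt{k}\sum_j\theta_j\psi_j$, which lands \emph{exactly} on $M$ (no renormalization, so no $\theta$-dependent rescaling to control), and then invoking the elementary fact $\min_{\theta\in S^{k-1}}\sum_j|\theta_j|^p=k^{1-p/2}$ to get $\int_\K|h(\theta)|^p\dx\ge k\int_\R|\psi|^p\dx$, whence $\max_\theta E_M(h(\theta))\le k\mathcal{E}(\psi)\le k\bigl(\mathcal{E}(\varphi_{\mu/k})+r_{L/k}(\mu/k)\bigr)<0$; the scaling identity $j\,\mathcal{E}(\varphi_{\mu/j})=j^{-2\beta}\mathcal{E}(\varphi_\mu)$ then yields the level bound. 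Your renormalization route can likely be made to work, but you would still need this worst-case-over-$\theta$ estimate, which your sketch does not supply.

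Second, your Lagrange multiplier argument is wrong as written, due to a sign error in the testing identity. With the equation $u''+\kappa(x)|u|^{p-2}u=\lambda u$, testing against $u_j$ gives $\lambda_j\mu=\|u_j\|_{L^p(\K)}^p-\|u_j'\|_{L^2(\G)}^2$, the \emph{opposite} of your identity $\lambda_j\mu=\|u_j'\|_{L^2(\G)}^2-\|u_j\|_{L^p(\K)}^p$. Your displayed chain then actually proves $\lambda_j\mu<\bigl(1-\tfrac{p}{2}\bigr)\|u_j'\|_{L^2(\G)}^2\le 0$, i.e.\ $\lambda_j<0$, directly contradicting the claim, and the closing appeal to ``a short computation forces $\lambda_j>0$'' is unsupported. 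With the correct sign the conclusion is immediate, as in the paper: since $p>2$ and $E_M(u_j)=c_j<0$,
\begin{equation*}
\lambda_j\mu=\int_\K|u_j|^p\dx-\int_\G|u_j'|^2\dx>\frac{2}{p}\int_\K|u_j|^p\dx-\int_\G|u_j'|^2\dx=-2E_M(u_j)>0,
\end{equation*}
with no need to treat the case $u_j'\equiv 0$ separately.
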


This result shows, in particular, that the localization of  the nonlinearity in the compact core  is essential. Consider for instance the simple case where $\G$ is an interval with two half--lines attached at its endpoints. In the degenerate case where the interval is reduced to a point equation
\eqref{bseq} becomes linear and there are of course no bound states, for any $\mu>0$. Also in the other degenerate case, where the interval is the whole real line, the functional
$E$ reduces to $\mathcal{E}$ of \eqref{fusol} and this has infinitely many ground states (the solitons), but no bound states at higher levels. It is thus the presence of the nonlinearity on a compact portion of the graph  only that generates the bound states.

\begin{remark}
We observe that since by the properties of solitons (Section \ref{solitons}),  $\frac1{j^{ 2\beta}}\,\mathcal{E}(\varphi_\mu) = j\mathcal{E}(\varphi_{\mu/j})$, the level estimate \eqref{bslevel} means, asymptotically in $\mu$, that the ``j-th" bound state $u_j$  behaves, energetically, not worse than the superposition 
of $j$ solitons of mass $\mu/j$, far apart from each other.
\end{remark}

\begin{remark}
As we will see in Section \ref{proofs}, the solution corresponding to $j=1$ is a global minimum of $E_M$, namely a {\em ground state}. This completes and clarifies some results of \cite{T}: in some cases ground states exist if and only if their mass is large enough and their level, when $\mu\to \infty$, is asymptotically lower than that of a soliton of the same mass.
\end{remark}

\begin{remark} 
\label{single}
We point out that we have stated our main result, Theorem \ref{main}, under the assumption that the nonlinearity be localized in the {\em whole} compact core of the graph. We have done so because this is the most meaningful case in the applications. However, the technique of the proofs allows us to obtain the same result by assuming that the nonlinearity is localized in a {\em subset} of the compact core, even in a single edge  (the case treated numerically in \cite{der}). No change is necessary to carry out the proof in this more general case.
\end{remark}

\begin{remark}
Some of the solutions we find might vanish identically on one or more half--lines, which is compatible with our definition of bound states. This type of solutions have been first found in \cite{CFN} on the tadpole graph for the everywhere nonlinear problem. Ground states, however, cannot have this property, because being of constant sign they would violate the Kirchoff condition at the vertices of the half--lines where they vanish (see \cite{T}).
\end{remark}

Driven by the applications, the study of dynamics on quantum graphs has gained popularity in recent years since graphs emulate accurately complex mesoscopic and optical networks and allow one to reproduce properties of quantum chaotic systems (see \cite{gnusmi,kottos}).

In particular, \cite{der} highlights some specific motivations for studying the existence of bound states for the NLS equation on metric graphs with nonlinearity localized on the compact core and \cite{noja} proposes the analysis of the problem from a rigorous point of view. 

The investigation of these topics  seems to play an important role in the analysis of the effects of nonlinearities on the transmission through a complex network of one--dimensional leads (such as \emph{optical fibers}) and on the properties of the \emph{Bose--Einstein condensates} in non--regular traps.

For an overview on the Schr\"odinger equation with localized or concentrated  nonlinearities in standard domains we refer the reader to \cite{adft,at,Cacciapuoti,josinio,nier}.

Finally, for the sake of completeness, we also  recall that a general investigation of the existence of {\em ground} states in the ``everywhere nonlinear'' problem (namely, the problem with the nonlinearity located on the whole graph $\G$) is developed in  \cite{acfn,ast1,ast2}.

\section{Functional setting}\label{set_res}

We start by recalling briefly some basic facts and defintions concerning metric graphs (for a modern account on the topic we refer the reader to \cite{Kuchment,berkolaiko,ast1} and references therein).

\medskip
In this paper, by \emph{connected metric graph} $\G=(\mathrm{V},\mathrm{E})$, we mean a connected {\em multigraph} (that is, multiple edges between the same vertices and self--loops are allowed), where each edge is a (possibly half--infinite) segment of line. The edges are joined at their endpoints --- the vertices of $\G$ --- according to the topology of the graph. Each edge $e$ is represented by either a closed bounded interval $I_e =[0,\ell_e]$, $\ell_e>0$, or a closed half--line $I_e = [0,+\infty)$, and a coordinate  $x_e$ is chosen in the corresponding interval. When the interval is bounded, the orientation of $x_e$ is arbitrary while, in case of an unbounded edge, the half--line always starts at $x_e=0$. In this way the graph $\G$ turns into a locally compact metric space, the metric being given by the shortest distance measured along the edges. In the sequel, for the sake of simplicity, we identify an edge $e$ with its corresponding interval $I_e$, since this does not give rise to misunderstandings. 

A metric graph is \emph{compact} if and only if it does not contain any half--line.  An important role in this paper is played by the following notion, introduced in \cite{ast2}.

\begin{definition}
\label{cc}
If $\G$ is a metric graph, we define its \emph{compact core} $\K$ as the metric subgraph of $\G$ consisting of all the bounded edges of $\G$.
\end{definition}

Obviously the compact core $\K$ of any graph $\G$ is compact and, when $\G$ is connected (as we always assume throughout), $\K$ is also connected.

\medskip
A function $u:\G\to\R$ can be regarded as a family of functions $(u_e)_{e\in\mathrm{E}}$, where $u_e:I_e\to\R$ is the restriction of $u$ to the edge (represented by) $I_e$. The usual $L^p$ spaces can be defined over $\G$ in the natural way, with norm
\[
\|u\|_{L^p(\G)}^p := \sum_{e\in\mathrm{E}} \| u_e\|_{L^p(I_e)}^p,
\]
while $H^1(\G)$ is the set of continuous $u:\G\to \R$ such that $u_e\in  H^1(I_e)$ for every $e\in\mathrm{E}$ and
\[
 \|u\|_{H^1(\G)}^2 := \|u'\|_{L^2(\G)}^2 + \|u\|_{L^2(\G)}^2.
\]
We notice that continuity at a vertex $\mathrm{v}$ is to be interpreted as a no--jump condition among different components $u_e$
meeting at $\mathrm{v}$. Further details can be found in \cite {ast1}. From now on we denote the space $H^1(\G)$ simply by $H$ and its norm by $\|\,\cdot\,\|$. Also,  $H'$ denotes the dual of $H$.

\medskip
As we outlined in the Introduction, the object of this paper is to prove existence of multiple bound states of prescribed mass for the NLS equation on
a noncompact metric graph $\G$, where the nonlinearity is localized {\em only} in the compact core $\K$. 
Of course we always assume that  $\K$ is not empty.

The precise definition of bound state of prescribed mass is the following.

\begin{definition}
\label{defbs}
Let $\G = (\mathrm{V},\mathrm{E})$ be a connected metric graph with nonempty compact core $\K$ and let $\kappa:\G\to \{0,1\}$ be the characteristic function of the compact core. 
We say that a function $u\in H$ is a \emph{bound state of mass $\mu$ for the NLS equation on $\G$ with nonlinearity localized on $\K$  and Kirchhoff conditions} if:
\begin{itemize}
\item[(i)] $\|u\|_{L^2(\G)}^2 = \mu$,
\item[(ii)] there exists $\lambda\in\R$ such that for every $e\in\mathrm{E}$
\[
u_e'' + \kappa(x)|u_e|^{p-2}\,u_e = \lambda u_e,
\]
\item[(iii)] for every vertex $\mathrm{v}$ in the compact core $\K$,
\[
\sum_{e\succ\mathrm{v}}\frac{du_e}{dx_e}(\mathrm{v}) = 0.
\]
\end{itemize}
\end{definition}


The symbol $e\succ\mathrm{v}$ means that the sum is extended to all edges $e$ incident at ${\mathrm v}$, while  $\frac{du_e}{dx_e}(\mathrm{v})$ is a shorthand notation for $u_e'(0)$ or $-u_e'(\ell_e)$, according to the fact that $x_e$ is equal to $0$ or $\ell_e$ at $\mathrm{v}$. 

For simplicity, we refer to a function $u$ satisfying Definition \ref{defbs} simply as a \emph{bound state of mass} $\mu$.

\medskip
Throughout this paper we always suppose that  $p\in(2,6)$, the so-called $L^2$--subcritical case.

\medskip
The problem of the search of bound states of mass $\mu$ has a variational structure that we now describe and that we will exploit to prove a multiplicity result. 

Let $E:H \to \R$ be the functional
\[
E(u) = \frac12\|u'\|_{L^2(\G)}^2 - \frac1p\|u\|_{L^p(\K)}^p = \frac12\int_\G |u'|^2\dx - \frac1p\int_\K |u|^p\dx.
\]
Plainly, $E\in C^1(H;\R)$ and
\[
E'(u)v = \int_\G u'v'\dx - \int_\K |u|^{p-2}\,uv\dx \qquad \forall v \in H.
\]
For $\mu >0$, define the manifold
\begin{equation}
\label{manif}
M= \{u\in H : \|u\|_{L^2(\G)}^2 = \mu\}.
\end{equation}
This is a submanifold of $H$ of codimension $1$ with the structure of  Finsler manifold (see e.g. \cite{bl2,Deimling}) and its tangent space at a given point $u$ is the closed subspace of $H$  defined by
\[
T_uM = \left\{v\in H:\int_{\G}uv\dx=0\right\}.
\]
We denote its dual space $(T_uM)'$  by $T_u'M$; the norms in $T_uM$ and in $T_u'M$ are those induced by the norms in $H$ and $H'$, respectively.

Finally, let $E_M: M\to \R$ denote the restriction of $E$ to $M$; of course $E_M$ is $C^1$ on $M$ and
\[
E_M'(u) v = E'(u)v\qquad \forall v \in T_uM.
\]
The following characterization follows immediately from the Lagrange multiplier theorem.

\begin{proposition}
A function $u\in H$ is a bound state of mass $\mu$ if and only if $u\in M$ and $E_M'(u) = 0$.
\end{proposition}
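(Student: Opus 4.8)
The statement is the standard Lagrange--multiplier characterization of constrained critical points, so the plan is to prove the two implications separately, the genuine content being the passage between the \emph{weak} Euler--Lagrange equation on $M$ and the \emph{strong} formulation given by conditions (ii) and (iii) of Definition \ref{defbs}. Throughout I would write the constraint as $G(u) = \|u\|_{L^2(\G)}^2$, so that $M = G^{-1}(\mu)$ and $G'(u)v = 2\int_\G uv\dx$, and record that $T_uM = \ker G'(u)$.

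For the implication ``constrained critical point $\Rightarrow$ bound state'', I would start from $E_M'(u)=0$, i.e.\ $E'(u)v = 0$ for all $v \in T_uM = \ker G'(u)$. Since $G'(u)u = 2\mu \neq 0$, the functional $G'(u)$ is nonzero in $H'$, so the elementary fact that a functional annihilating the kernel of a nonzero functional is a scalar multiple of it yields a $\lambda\in\R$ such that $E'(u)v = \lambda\int_\G uv\dx$ for all $v\in H$, that is,
\[
\int_\G u'v'\dx - \int_\G \kappa\,|u|^{p-2}uv\dx = \lambda\int_\G uv\dx \qquad \forall v \in H,
\]
where I used that $\kappa$ is the characteristic function of $\K$. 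Testing with functions $v$ supported in the interior of a single edge gives the weak form of the ODE on each $I_e$; a standard bootstrap (the right--hand side $\lambda u_e - \kappa|u_e|^{p-2}u_e$ lies in $L^2$, indeed is continuous since $u_e \in H^1 \hookrightarrow C$) upgrades $u_e$ to $H^2(I_e)$ and then to a classical $C^2$ solution, which is precisely condition (ii).

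With (ii) in hand, I would take an arbitrary $v \in H$ and integrate by parts edgewise, $\int_{I_e} u_e'v'\dx = -\int_{I_e}u_e''v\dx + [u_e'v]_{\partial I_e}$. Substituting the equation, the bulk integrals exactly cancel the right--hand side, leaving only boundary contributions; on the half--lines the term at infinity vanishes because $u_e,v \in H^1(0,\infty)$ decay, so every surviving term sits at a finite vertex. Collecting them and using the continuity of $v$ (so that $v$ has a single value $v(\mathrm{v})$ at each vertex) produces
\[
\sum_{\mathrm{v}} v(\mathrm{v}) \sum_{e\succ\mathrm{v}} \frac{du_e}{dx_e}(\mathrm{v}) = 0 \qquad \forall v \in H.
\]
Because $\G$ is connected with nonempty $\K$, every vertex of $\G$ is an endpoint of some bounded edge, hence a vertex of $\K$; choosing admissible tent--type test functions in $H$ that take a prescribed value at one vertex and vanish at all the others then forces the Kirchhoff condition (iii) at each vertex. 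Together with $u\in M$, i.e.\ (i), this shows that $u$ is a bound state.

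The converse is the same computation read backwards: if $u$ is a bound state then (i) gives $u \in M$, and for $v \in T_uM$ one integrates by parts, uses (ii) to replace $u_e''$ and (iii) to discard the boundary terms, obtaining $E'(u)v = \lambda\int_\G uv\dx = 0$ since $v \in T_uM$; hence $E_M'(u)=0$. The only genuinely delicate points are the regularity bootstrap that renders the vertex traces $u_e'(0)$ and $u_e'(\ell_e)$ meaningful, and the bookkeeping of the boundary terms --- in particular checking that the values of test functions at distinct vertices may be assigned independently and that the half--line endpoints at infinity contribute nothing.
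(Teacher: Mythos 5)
Your proof is correct, and it is precisely the argument the paper compresses into the single line ``follows immediately from the Lagrange multiplier theorem'': the multiplier rule for linear functionals (a functional annihilating the kernel of the nonzero functional $G'(u)$ is a multiple of it), the $H^2$ bootstrap on each edge, the edgewise integration by parts with vertex test functions to extract the Kirchhoff conditions, and the same computation reversed for the converse. Two cosmetic remarks only: your multiplier is $-\lambda$ relative to condition (ii) as normalized in \eqref{lam}, and the vanishing of the boundary term at infinity requires $u_e'\to 0$, which follows from the $H^2(I_e)$ regularity you had already established (so that $u_e'\in H^1(0,+\infty)$) rather than from $u_e\in H^1$ alone.
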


We are thus led to the search of critical points of the functional $E_M$. To simplify some computations in the next sections, we set 
\begin{equation}
\label{lam}
\lambda = \lambda(u) := -\frac1\mu E'(u)u = \frac1\mu \left(\int_\K |u|^p\dx - \int_\G |u'|^2\dx\right)
\end{equation}
and define, for every $u\in M$, a linear functional $J(u) : H\to \R$ as 
\[
J(u)v = \int_\G u'v'\dx - \int_\K |u|^{p-2}\,uv \dx +  \lambda \int_\G uv\dx.
\]
Note that
\begin{equation}
\label{juv}
J(u)v = E_M'(u)v\qquad \forall v \in T_uM
\end{equation}
and, by definition of $\lambda$, 
\begin{equation}
\label{juu}
J(u)u = 0\qquad \forall u \in H.
\end{equation}
It is common to identify $E_M'(u)$ with $J(u)$. However, as pointed out in Remark 8.2 of \cite{bl2}, the identification is not always correct, away from critical points. The precise relation between $E_M'(u)$ and $J(u)$ that we are going to use is given in the following statement, discussed abstractly in \cite{bl2}. We give a proof for completeness.

\begin{lemma}
Let $u\in M$. Then
\begin{equation}
\label{norms}
\|E_M'(u)\|_{T_u'M} \le \|J(u)\|_{H'} \le \left(1+\mu^{-1/2}\|u\| \right)\|E_M'(u)\|_{T_u'M}.
\end{equation}
\end{lemma}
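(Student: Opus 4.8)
The plan is to establish the two inequalities in \eqref{norms} separately, both resting on the structural identities \eqref{juv} and \eqref{juu} together with an $L^2$--orthogonal decomposition of $H$ along $u$.

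For the left-hand inequality I would simply exploit the fact that, by \eqref{juv}, the functionals $E_M'(u)$ and $J(u)$ coincide on the subspace $T_uM\subset H$. Hence
\[
\|E_M'(u)\|_{T_u'M}=\sup_{v\in T_uM,\ \|v\|\le1}|E_M'(u)v|=\sup_{v\in T_uM,\ \|v\|\le1}|J(u)v|\le\sup_{v\in H,\ \|v\|\le1}|J(u)v|=\|J(u)\|_{H'},
\]
the inequality being nothing more than the fact that the supremum defining $\|J(u)\|_{H'}$ is taken over the larger set $\{v\in H:\|v\|\le1\}$.

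For the right-hand inequality the idea is to reduce the action of $J(u)$ on an arbitrary $v\in H$ to its action on a tangent vector. Since $T_uM=\{v\in H:\int_\G uv\dx=0\}$ is precisely the $L^2$--orthogonal complement of $u$ inside $H$, every $v\in H$ splits as
\[
v=w+\frac{1}{\mu}\Big(\int_\G uv\dx\Big)u,\qquad w:=v-\frac{1}{\mu}\Big(\int_\G uv\dx\Big)u\in T_uM,
\]
where I have used $\int_\G u^2\dx=\|u\|_{L^2(\G)}^2=\mu$. Applying $J(u)$ and invoking \eqref{juu} to annihilate the component along $u$, then \eqref{juv} to rewrite the remaining term, gives $J(u)v=J(u)w=E_M'(u)w$, whence $|J(u)v|\le\|E_M'(u)\|_{T_u'M}\|w\|$. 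It then remains to estimate $\|w\|$ in terms of $\|v\|$: by the triangle inequality, the Cauchy--Schwarz inequality in $L^2(\G)$, and the elementary bound $\|v\|_{L^2(\G)}\le\|v\|$,
\[
\|w\|\le\|v\|+\frac{1}{\mu}\Big|\int_\G uv\dx\Big|\,\|u\|\le\|v\|+\frac{1}{\mu}\|u\|_{L^2(\G)}\|v\|_{L^2(\G)}\|u\|\le\big(1+\mu^{-1/2}\|u\|\big)\|v\|,
\]
using $\|u\|_{L^2(\G)}=\mu^{1/2}$. Combining this with the previous estimate and taking the supremum over $\|v\|\le1$ yields the right-hand bound in \eqref{norms}.

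There is no genuine obstacle in this argument; the only points requiring care are that the splitting of $v$ must be the $L^2$--orthogonal (rather than the $H$--orthogonal) projection, since $T_uM$ is defined through the $L^2$ pairing, and that the interplay between the $L^2$ and $H^1$ norms --- encoded in $\|v\|_{L^2(\G)}\le\|v\|$ and $\|u\|_{L^2(\G)}=\mu^{1/2}$ --- is exactly what produces the constant $1+\mu^{-1/2}\|u\|$.
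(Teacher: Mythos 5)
Your proof is correct and follows essentially the same route as the paper's: the first inequality is the trivial restriction of the supremum to $T_uM$ via \eqref{juv}, and the second uses exactly the paper's $L^2$--orthogonal splitting $v=\bigl(\mu^{-1}\int_\G uv\dx\bigr)u+\pi_u v$ together with \eqref{juu} and \eqref{juv}. Your only addition is to spell out the elementary estimate $\|\pi_u v\|\le\bigl(1+\mu^{-1/2}\|u\|\bigr)\|v\|$, which the paper states without detail.
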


\begin{proof}
By \eqref{juv},
\[
\|E_M'(u)\|_{T_u'M} \le \|J(u)\|_{H'},
\]
proving the first inequality in \eqref{norms}. Let $\pi_u : H \to T_uM$ be the restriction to $H$ of the $L^2$ orthogonal projection onto $T_uM$. Then, every $v\in H$
can be written as $v =\left(\mu^{-1}\int_\G uv\dx \right) u + \pi_u v$ and 
\begin{equation}
\label{ineq}
\|\pi_u v\| \le  \left(1+ \mu^{-1/2}\|u\| \right)\|v\|.
\end{equation}
By \eqref{juv} and \eqref{juu}, for every $v\in H$,
\[
J(u)v = J(u)\left(\left(\mu^{-1}\int_\G uv\dx\right) u + \pi_u v\right) = J(u)\pi_u v = E_M'(u)\pi_u v.
\] 
Then
\[
|J(u)v| \le \|E_M'(u)\|_{T_u'M}\|\pi_u v\| \qquad \forall v \in H.
\]
In view of \eqref{ineq}, this proves also the second inequality in \eqref{norms}.
\end{proof}

\begin{remark}
\label{bdps}
From \eqref{norms} we see that $u\in M$ is a critical point for $E_M$ if and only if $J(u)=0$. Also, the preceding lemma allows us to say that if $(u_n)\subset M$ is a \emph{bounded} sequence such that $\|E_M'(u_n)\|_{T_{u_n}'M} \to 0$ , then $\|J(u_n)\|_{H'} \to 0$, and vice--versa. Note that if the boundedness assumption is dropped, this need no longer be true.
\end{remark}

\section{Some properties of solitons}\label{solitons}

Let $\mu>0$ and $p\in (2,6)$. The \emph{solitons of mass} $\mu$ are the minimizers of the functional
\[
\mathcal{E}(u) = \frac12\int_{\R}|u'|^2\dx - \frac1p\int_{\R}|u|^p\dx
\]
on the set $\{u\in H^1(\R) : \|u\|_{L^2(\R)}^2 = \mu\}$. It is well known (\cite{cazenave, zakharov}) that, up to a change of sign,  these are the translates of a unique  function $\varphi_\mu$ (the dependence on $p$ is understood) which is positive, even and radially decreasing. Precisely, and in order to enlighten the dependence on $\mu$,
\begin{equation}
\label{soliton}
\varphi_\mu(x) = \mu^{\alpha}\varphi_1(\mu^{\beta}x), \qquad  \alpha = \frac{2}{6-p},\quad\beta = \frac{p-2}{6-p},
\end{equation}
where
\begin{equation}
\label{form}
\varphi_1(x) := C_p\mathrm{sech}^{\alpha/\beta}(c_p x), \qquad\mbox{with}\quad C_p, c_p>0.
\end{equation}
A direct computation shows that
\begin{equation}
\label{levsol}
 \mathcal{E}(\varphi_1) < 0 \qquad\mbox{and}\qquad \mathcal{E}(\varphi_\mu) = \mu^{2\beta+1}\mathcal{E}(\varphi_1).
\end{equation}
The function $\varphi_\mu$ is the unique (positive and radially decreasing) solution of the stationary NLS equation
\[
 u''+|u|^{p-2}\,u=\lambda u \qquad\text{ on } \R
\]
with
\begin{equation}
\label{molt}
\lambda = \frac1\mu \left(\|u\|_{L^p(\R)}^p - \|u'\|_{L^2(\R)}^2 \right) > 0
\end{equation}
in the class of functions $u\in H^1(\R)$ with $\|u\|_{L^2(\R)}^2=\mu$. The conservation of energy for $\varphi_\mu$ reads
\begin{equation}
\label{energy}
\frac12|\varphi_\mu'(x)|^2 + \frac1p|\varphi_\mu(x)|^p = \frac\lambda2|\varphi_\mu(x)|^2 \qquad\forall x\in\R.
\end{equation}

Let $\mathcal{L}_\mu(x)$ be the Lagrangian density of $\mathcal{E}(\varphi_\mu)$, namely
\[
\mathcal{L}_\mu(x) = \frac12|\varphi_\mu'(x)|^2 - \frac1p |\varphi_\mu(x)|^p.
\]
The following simple property will be used below.

\begin{lemma}
\label{pos}
For every $\mu>0$ there exists a unique point $x_{\mu}>0$ such that
\[
\mathcal{L}_\mu(x)\ge 0 \iff  |x|\ge x_\mu.
\]
In particular, $x_\mu = \mu^{-\beta}x_1$, so that $x_\mu$ is a decreasing function of $\mu$ and tends to zero as $\mu\to \infty$.
\end{lemma}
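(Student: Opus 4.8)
The plan is to eliminate the derivative term in $\mathcal{L}_\mu$ by invoking the conservation of energy \eqref{energy}, which reduces the problem to tracking the sign of a single monotone function of $|x|$. First I would solve \eqref{energy} for $\tfrac12|\varphi_\mu'|^2$ and substitute into the definition of $\mathcal{L}_\mu$ to obtain
\[
\mathcal{L}_\mu(x) = \frac{\lambda}{2}\varphi_\mu(x)^2 - \frac2p\varphi_\mu(x)^p = \varphi_\mu(x)^2\left(\frac{\lambda}{2} - \frac2p\varphi_\mu(x)^{p-2}\right),
\]
where $\lambda>0$ is the multiplier in \eqref{molt}. Since $\varphi_\mu$ is everywhere positive, the sign of $\mathcal{L}_\mu(x)$ coincides with the sign of $g(x) := \tfrac{\lambda}{2} - \tfrac2p\varphi_\mu(x)^{p-2}$.

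Next I would exploit the qualitative properties of the soliton recorded in Section \ref{solitons}: $\varphi_\mu$ is even and strictly radially decreasing, with $\varphi_\mu(x)\to 0$ as $|x|\to\infty$. Consequently $g$ is even and strictly increasing in $|x|$. At the origin one has $g(0)<0$: indeed $\varphi_\mu'(0)=0$, so $\mathcal{L}_\mu(0) = -\tfrac1p\varphi_\mu(0)^p<0$, which forces the bracket $g(0)$ to be negative. On the other hand $g(x)\to\tfrac{\lambda}{2}>0$ as $|x|\to\infty$. By continuity and strict monotonicity in $|x|$, $g$ vanishes at exactly one value $x_\mu>0$, is negative for $0\le|x|<x_\mu$ and positive for $|x|>x_\mu$. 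This yields precisely the equivalence $\mathcal{L}_\mu(x)\ge 0 \iff |x|\ge x_\mu$, together with the uniqueness of $x_\mu$.

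Finally, for the scaling identity I would use \eqref{soliton}. A direct computation, combined with the identity $2(\alpha+\beta)=\alpha p$ (which holds because $\alpha+\beta=\tfrac{p}{6-p}$ and $\alpha p=\tfrac{2p}{6-p}$), shows that
\[
\mathcal{L}_\mu(x) = \mu^{\,2(\alpha+\beta)}\,\mathcal{L}_1(\mu^{\beta}x).
\]
Hence $\mathcal{L}_\mu(x)=0$ if and only if $\mathcal{L}_1(\mu^{\beta}x)=0$, so the unique positive zero of $\mathcal{L}_\mu$ satisfies $\mu^{\beta}x_\mu = x_1$, that is $x_\mu=\mu^{-\beta}x_1$. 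Since $\beta=\tfrac{p-2}{6-p}>0$ for $p\in(2,6)$, this is decreasing in $\mu$ and tends to $0$ as $\mu\to\infty$.

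I expect no serious obstacle here: the only point requiring care is the strict monotonicity of $g$ in $|x|$, equivalently the strict radial monotonicity of $\varphi_\mu$, which is however immediate from the explicit form \eqref{form}, since $\mathrm{sech}$ is strictly decreasing on $(0,\infty)$.
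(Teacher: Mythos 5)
Your proof is correct and takes essentially the same route as the paper's: both use the conservation law \eqref{energy} to reduce the sign of $\mathcal{L}_\mu$ to the pointwise condition $\varphi^{p-2}\le p\lambda/4$, conclude via the strict radial monotonicity of the soliton profile, and derive $x_\mu=\mu^{-\beta}x_1$ from the scaling identity $\mathcal{L}_\mu(x)=\mu^{\alpha p}\mathcal{L}_1(\mu^\beta x)$ with $2\alpha+2\beta=\alpha p$. The only (immaterial) difference is the order of operations---the paper rescales to $\mu=1$ first while you argue at fixed $\mu$ and invoke scaling only at the end---and your explicit justification of $\mathcal{L}_\mu(0)<0$ via $\varphi_\mu'(0)=0$ spells out a point the paper leaves implicit.
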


\begin{proof}
By \eqref{soliton}, $\mathcal{L}_\mu(x) = \mu^{\alpha p}\mathcal{L}_1(\mu^\beta x)$ (note that $2\alpha+2\beta= \alpha p$). The inequality
\[
\mathcal{L}_1(t) \ge 0,
\]
in view of \eqref{energy}, is equivalent to
\[
\frac2p|\varphi_1(t)|^p \le \frac\lambda2|\varphi_1(t)|^2,
\]
namely,
\[
|\varphi_1(t)|^{p-2}\le \frac{p\lambda}{4}.
\]
Notice that here $\lambda$ no longer depends on $\mu$, being given by \eqref{molt} with $u=\varphi_1$. As $\mathcal{L}_1(0) < 0$, we see that $|\varphi_1(0)|^{p-2} > p \lambda/4$, so that, since $\varphi_1$ is radially decreasing, there is a unique point $x_1$ such that
\[
|\varphi_1(t)|^{p-2} \le \frac{p \lambda}{4} \iff |t| \ge x_1.
\]
Setting $x_\mu = \mu^{-\beta}x_1$, the proof is complete.
\end{proof}

The following proposition is crucial for our purposes. Roughly, it says that when the mass is large, the energy of a soliton can be approximated 
using functions with small support.

\begin{proposition}
\label{approx}
For every $\ell>0$ there exists $\mu_\ell>0$ such that for all $\mu \ge \mu_\ell$, there exists $\psi=\psi_\mu\in H^1(\R)$ satisfying
\begin{itemize}
\item[(i)] $\|\psi\|_{L^2(\R)}^2 = \mu$,
\item[(ii)] ${\rm supp }\,\psi \subset [0, \ell]$,
\item[(iii)] $\mathcal{E}(\psi) \le \mathcal{E}(\varphi_\mu) + r_\ell(\mu) <0$,
\end{itemize}
where $r_\ell(\mu) \to 0$ (exponentially fast) as $\mu \to \infty$.
\end{proposition}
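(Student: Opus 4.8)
The plan is to build $\psi$ by truncating a suitably translated copy of the soliton $\varphi_\mu$ to the interval $[0,\ell]$ and then rescaling it to restore the exact mass. Concretely, I set $w(x) := \varphi_\mu(x-\ell/2)$, so that $w$ is a positive even bump centered at the midpoint of $[0,\ell]$, with $\mathcal{E}(w)=\mathcal{E}(\varphi_\mu)$ and $\|w\|_{L^2(\R)}^2=\mu$ by translation invariance. I then fix, once and for all, a Lipschitz cutoff $\chi$ with $0\le\chi\le 1$, $\chi\equiv 1$ on $[\ell/4,3\ell/4]$ and $\chi\equiv 0$ outside $[0,\ell]$, and put $\psi_0:=\chi w\in H^1(\R)$. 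By construction ${\rm supp}\,\psi_0\subset[0,\ell]$, so (ii) will hold for any positive multiple of $\psi_0$; the whole task is to show that truncation and the subsequent normalization perturb the mass and the energy only by exponentially small amounts.

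The heart of the matter is the exponential decay of $\varphi_\mu$ away from its center as $\mu\to\infty$. From \eqref{soliton}--\eqref{form} one has $\varphi_\mu(x)=\mu^\alpha C_p\,{\rm sech}^{\alpha/\beta}(c_p\mu^\beta x)$ and a similar expression for $\varphi_\mu'$; using ${\rm sech}(y)\le 2e^{-|y|}$ one obtains, for every $x$ with $|x|\ge \ell/4$, bounds of the form $\varphi_\mu(x)+|\varphi_\mu'(x)|\le P(\mu)\,e^{-c\mu^\beta\ell/4}$, where $P$ is a fixed power of $\mu$ and $c>0$ depends only on $p$. Since $\beta=(p-2)/(6-p)>0$, this quantity tends to $0$ exponentially fast. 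Now $\chi$ alters $w$ only on $[0,\ell/4]\cup[3\ell/4,\ell]$, i.e.\ at distance at least $\ell/4$ from the center; therefore the mass defect $\mu-\|\psi_0\|_{L^2(\R)}^2=\int(1-\chi^2)w^2\dx$ and the energy defect $\mathcal{E}(\psi_0)-\mathcal{E}(\varphi_\mu)$ are both controlled by such exponentially small terms. For the latter one expands $(\chi w)'=\chi'w+\chi w'$ and collects the three kinetic contributions carrying a factor $\chi'^2$, $\chi\chi'$ or $\chi^2-1$ together with the potential term carrying $\chi^p-1$: every integrand is supported in the two lateral strips where $w$ and $w'$ are exponentially small. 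Hence $\|\psi_0\|_{L^2(\R)}^2=\mu-\varepsilon(\mu)$ and $\mathcal{E}(\psi_0)=\mathcal{E}(\varphi_\mu)+\rho(\mu)$ with $\varepsilon(\mu),|\rho(\mu)|\to 0$ exponentially.

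Finally I would restore the mass by setting $\psi:=\theta\psi_0$ with $\theta:=(\mu/\|\psi_0\|_{L^2(\R)}^2)^{1/2}=(1-\varepsilon(\mu)/\mu)^{-1/2}$, which yields (i) and keeps (ii). Since $\theta^2-1$ and $\theta^p-1$ are $O(\varepsilon(\mu)/\mu)$, while the kinetic and potential integrals of $\psi_0$ grow only polynomially (both scale like $\mu^{2\beta+1}$, exactly as for $\varphi_\mu$ by \eqref{levsol}), the extra change $\mathcal{E}(\theta\psi_0)-\mathcal{E}(\psi_0)$ is again exponentially small. Summing the two contributions gives $\mathcal{E}(\psi)=\mathcal{E}(\varphi_\mu)+r_\ell(\mu)$ with $r_\ell(\mu)\to 0$ exponentially, which is the first part of (iii); for the strict inequality, recall from \eqref{levsol} that $\mathcal{E}(\varphi_\mu)=\mu^{2\beta+1}\mathcal{E}(\varphi_1)$ with $\mathcal{E}(\varphi_1)<0$, so $\mathcal{E}(\varphi_\mu)\to-\infty$ polynomially while $r_\ell(\mu)\to 0$, forcing $\mathcal{E}(\psi)<0$ for all $\mu$ beyond a threshold $\mu_\ell$. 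The only genuinely delicate point — and the step I expect to require care — is the bookkeeping of these exponential estimates: one must verify that each correction, after being multiplied by the polynomially growing factors coming from the energy scaling and from the renormalization constant $\theta$, still vanishes exponentially, which is exactly what the factor $\beta>0$ in the exponent $e^{-c\mu^\beta\ell/4}$ guarantees.
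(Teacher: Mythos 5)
Your proof is correct, but it takes a genuinely different route from the paper's. You cut the soliton \emph{horizontally}, multiplying a translated copy of $\varphi_\mu$ by a fixed cutoff $\chi$, and you control every error term --- the mass defect, the kinetic pieces $\chi'^2w^2$, $\chi\chi'ww'$, $(\chi^2-1)w'^2$, the potential piece $(\chi^p-1)|w|^p$, and the renormalization --- via the pointwise exponential decay of $\varphi_\mu$ and $\varphi_\mu'$, checking that polynomial prefactors of order $\mu^{2\beta+1}$ cannot spoil the factor $e^{-c\mu^\beta\ell/4}$. The paper instead cuts \emph{vertically}: with $\tau=\ell/2$ it sets $v_\mu=\bigl(\varphi_\mu-\varphi_\mu(\tau)\bigr)^+$ and invokes Lemma \ref{pos} on the sign of the Lagrangian density, so that $\int_{-\tau}^{\tau}\mathcal{L}_\mu\dx\le\mathcal{E}(\varphi_\mu)$ comes for free and a single error term survives, estimated by the Mean Value Theorem; no bounds on $\varphi_\mu'$ are needed at all. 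Moreover, the paper's renormalization step costs nothing: since $\sqrt{\mu}/\|v_\mu\|_{L^2(\R)}>1$ and $p>2$, one has $\mathcal{E}(cv)\le c^2\mathcal{E}(v)\le\mathcal{E}(v)$ whenever $c\ge1$ and $\mathcal{E}(v)\le0$ --- an observation that would let you delete your entire third paragraph of bookkeeping, since you already know $\mathcal{E}(\psi_0)<0$ for $\mu$ large. What the paper's leaner argument additionally buys is the explicit formula \eqref{erreelle} for $r_\ell(\mu)$, obtained by absorbing the power $\mu^{2\beta+1}$ into the exponential through the combined variable $\tau\mu^\beta$; its monotonicity in both $\ell$ and $\mu$ (Remark \ref{decrease}) is then used in the proof of Theorem \ref{main} to define $\sigma_k(\mu)$, and with your construction you would need an analogous absorbing step to bring your $r_\ell$ into such a monotone form. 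One presentational caveat in your write-up: on the unbounded tails where $\chi\equiv0$, a uniform bound at distance $\ell/4$ from the center does not by itself control the integrals; you must integrate the pointwise decay $e^{-c\mu^\beta|x-\ell/2|}$ in $x$, which is exactly what \eqref{soliton}--\eqref{form} provide --- an imprecision, not a gap.
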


\begin{proof}
Given $\ell >0$, let $\tau = \ell/2$. We take, using Lemma \ref{pos}, $\mu_1$ so large that for every $\mu\ge \mu_1$, the point $x_\mu$ of Lemma \ref{pos}
satisfies $x_\mu\le \tau$. In this way,
\begin{equation}
\label{lapos}
\mathcal{L}_\mu (x) \ge 0 \qquad \text{for every $|x|\ge \tau$ and every $\mu \ge \mu_1$}.
\end{equation}
Then, for each $\mu \ge \mu_1$, we define the function
\[
v_\mu(x)=\bigl(\varphi_\mu(x)-\varphi_\mu(\tau)\bigr)^+,
\]
so that ${\rm supp}\,v_\mu \subset [-\tau,\tau]$. By definition of $\mathcal{L}_\mu$ and \eqref{lapos} we see that
\begin{align*}
\mathcal{E}(v_\mu) & = \frac12\int_{-\tau}^{\tau}|\varphi_\mu'|^2\dx - \frac1p\int_{-\tau}^{\tau}|\varphi_\mu-\varphi_\mu(\tau)|^p\dx \\
& = \int_{-\tau}^{\tau}\mathcal{L}_\mu\dx+\frac1p\int_{-\tau}^{\tau}(|\varphi_\mu|^p-|\varphi_\mu-\varphi_\mu(\tau)|^p)\dx \\
& \le \mathcal{E}(\varphi_\mu)+\frac1p\int_{-\tau}^{\tau}(|\varphi_\mu|^p-|\varphi_\mu-\varphi_\mu(\tau)|^p)\dx.
\end{align*}
Now, by the Mean Value Theorem,
\[
0 \le |\varphi_\mu(x)|^p - |\varphi_\mu(x)-\varphi_\mu(\tau)|^p \le p\varphi_\mu(\tau)|\varphi_\mu(x)|^{p-1} \qquad \forall x\in[-\tau,\tau],
\]
and thus, by \eqref{soliton},
\begin{align}
\label{stima}
\mathcal{E}(v_\mu) & \le \mathcal{E}(\varphi_\mu) + \varphi_\mu(\tau)\int_{-\tau}^{\tau}|\varphi_\mu(x)|^{p-1}\dx \nonumber \\
& = \mathcal{E}(\varphi_\mu) + \mu^{\alpha p-\beta} \varphi_1(\tau\mu^\beta)\int_{-\tau\mu^\beta}^{\tau\mu^\beta}|\varphi_1(y)|^{p-1}\,dy \nonumber \\
& \le \mathcal{E}(\varphi_\mu) + \mu^{2\beta+1}\varphi_1(\tau\mu^\beta)\int_{-\infty}^{+\infty}|\varphi_1(y)|^{p-1}\,dy\nonumber \\
& \le  \mathcal{E}(\varphi_\mu)  + C\mu^{2\beta+1}\varphi_1(\tau\mu^\beta),
\end{align}
having set $y = \mu^\beta x$ in the second line (we have also written $\alpha p -\beta$ as $2\beta+1$, which is the same). Now, by the form of $\varphi_1$ given in \eqref{form}, it is easy to check that
\begin{align*}
\mu^{2\beta+1} \varphi_1(\tau\mu^\beta) & = \frac{1}{\tau^{2+1/\beta}}(\tau \mu^\beta)^{2+1/\beta} \varphi_1(\tau\mu^\beta) \\
& \le \frac{C}{\tau^{2+1/\beta}}e^{-C\tau\mu^\beta} = \frac{C}{\ell^{2+1/\beta}}e^{-C\ell\mu^\beta},
\end{align*}
where all the constants $C$ depend only on $p$. Setting 
\begin{equation}
\label{erreelle}
r_\ell(\mu) := \frac{C}{\ell^{2+1/\beta}}e^{-C\ell\mu^\beta},
\end{equation}
we can write \eqref{stima} as
\[
\mathcal{E}(v_\mu) \le \mathcal{E}(\varphi_\mu)+ r_\ell(\mu),
\]
which then holds for every $\mu \ge\mu_1$. The quantity on the right--hand side is negative as soon as $\mu_1$ is large enough, since 
as $\mu\to \infty$, $r_\ell(\mu) \to 0$  and $\mathcal{E}(\varphi_\mu) \to -\infty$ by \eqref{levsol} . Finally, setting
\[
\psi(x):=\frac{\sqrt\mu}{\|v_\mu\|_{L^2(\R)}}v_\mu(x-\tau),
\]
we find that $\psi \in H^1(\R)$,  $\|\psi\|_{L^2(\R)}^2 =\mu$, ${\rm supp}\,\psi \subset [0,2\tau]= [0,\ell]$ and
\[
\mathcal{E}(\psi) \le \mathcal{E}(v_\mu) \le  \mathcal{E}(\varphi_\mu)+ r_\ell(\mu),
\]
because $\sqrt\mu >\|v_\mu\|_{L^2(\R)}$ and $\mathcal{E}(v_\mu)$ is negative. This completes the proof.
\end{proof}

\begin{remark}
\label{decrease}
For future reference we note that the quantity $r_\ell(\mu)$ defined in \eqref{erreelle} is decreasing both in $\mu$ and in $\ell$.
\end{remark}

\section{The Palais--smale condition}\label{palais}

In this section we analyze the Palais--Smale condition for the functional $E_M$. To be precise, we recall the following definition.

\begin{definition}
\label{ps} Let $c\in \R$. We say that a sequence $(u_n)\subset M$ is a \emph{Palais--Smale sequence} for $E_M$ at level $c$ if, as $n\to \infty$,
\begin{itemize}
\item[(i)] $E_M(u_n) \to c$,
\item[(ii)] $\|E_M'(u_n)\|_{T'_{u_n}M}\to 0$.
\end{itemize}
We say that $E_M$ satisfies the \emph{Palais--Smale condition} at level $c$ (shortly, $E_M$ satisfies (PS)$_c$) if every Palais--Smale
sequence at level $c$ admits a subsequence converging in $M$.
\end{definition}

The analysis of the Palais--Smale condition takes advantage of the following Gagliardo--Nirenberg inequality on graphs (for a proof see \cite{ast2,T}).

\begin{proposition}
For every $p\in[2,\infty)$ there exists a constant $\mathcal{C}_p>0$ (depending only on $p$) such that
\begin{equation}
\label{GN}
\|u\|_{L^p(\G)}^p \leq \mathcal{C}_p\|u\|_{L^2(\G)}^{\frac{p}{2}+1}\|u'\|_{L^2(\G)}^{\frac{p}{2}-1}\qquad\forall u\in H
\end{equation}
\end{proposition}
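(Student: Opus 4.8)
The plan is to deduce the inequality from an $L^\infty$ bound followed by an elementary interpolation. The heart of the matter is the continuous embedding $H \hookrightarrow L^\infty(\G)$ together with the quantitative estimate
\[
\|u\|_{L^\infty(\G)}^2 \le 2\,\|u\|_{L^2(\G)}\,\|u'\|_{L^2(\G)} \qquad \forall u \in H,
\]
which I expect to be the crucial (and the only genuinely graph-dependent) ingredient. Note that such a bound cannot hold on a \emph{compact} graph --- a nonzero constant function would violate it --- so the argument must exploit the noncompactness of $\G$, that is, the presence of at least one half-line.

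To prove the $L^\infty$ estimate, fix a point $x_0 \in \G$ and choose an injective path $P$ starting at $x_0$ and running to infinity along some half-line; such a path exists because $\G$ is connected and noncompact. Reparametrizing $P$ by arclength identifies it with $[0,+\infty)$, and the restriction of $u$ to $P$ belongs to $H^1(0,+\infty)$: indeed $u$ is $H^1$ on each edge and continuous at the vertices, so no jumps are introduced and $u^2$ is absolutely continuous along $P$. Since $u' \in L^2$ forces $u$ to have a limit at infinity along $P$ and $u \in L^2$ forces that limit to be $0$, the fundamental theorem of calculus gives
\[
u(x_0)^2 = -\int_{P} (u^2)'\,ds = -2\int_P u\,u'\,ds \le 2\int_P |u|\,|u'|\,ds \le 2\,\|u\|_{L^2(\G)}\,\|u'\|_{L^2(\G)},
\]
the last step by Cauchy--Schwarz together with $\|\,\cdot\,\|_{L^2(P)} \le \|\,\cdot\,\|_{L^2(\G)}$. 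As $x_0$ is arbitrary, the $L^\infty$ bound follows.

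With this in hand the inequality is immediate. For $p \ge 2$ I would write $|u|^p = |u|^{p-2}\,|u|^2$ and estimate the first factor by the sup norm:
\[
\|u\|_{L^p(\G)}^p = \int_\G |u|^{p-2}\,|u|^2\dx \le \|u\|_{L^\infty(\G)}^{p-2}\,\|u\|_{L^2(\G)}^2.
\]
Inserting the $L^\infty$ bound and using $\tfrac{p-2}{2}+2 = \tfrac p2 + 1$ and $\tfrac{p-2}{2} = \tfrac p2 - 1$ yields the claim with $\mathcal{C}_p = 2^{\frac p2 - 1}$ (the case $p=2$ being trivial).

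The main obstacle is entirely contained in the $L^\infty$ step: one must verify that a path to infinity exists and that passing through vertices does not spoil the fundamental theorem of calculus, the point being that continuity of $u$ at the vertices --- built into the definition of $H$ --- is exactly what keeps $u^2$ absolutely continuous along $P$, while the half-line is what forces the decay needed to start the integration. The subsequent interpolation is routine, and the precise value of the constant is irrelevant to the statement.
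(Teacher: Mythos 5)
Your proof is correct, but it follows a genuinely different route from the one the paper relies on: in fact the paper gives no proof at all, deferring to \cite{ast2} and \cite{T}, where the inequality is obtained by rearrangement --- one replaces $u$ by its decreasing rearrangement on the half-line $[0,+\infty)$, which preserves every $L^q$ norm and does not increase $\|u'\|_{L^2}$ (a P\'olya--Szeg\H{o}-type inequality, valid precisely because $\G$ is connected and noncompact), and then invokes the classical one-dimensional Gagliardo--Nirenberg inequality on the half-line. Your argument --- the bound $\|u\|_{L^\infty(\G)}^2 \le 2\|u\|_{L^2(\G)}\|u'\|_{L^2(\G)}$ via an injective path to infinity, followed by the interpolation $\|u\|_{L^p(\G)}^p \le \|u\|_{L^\infty(\G)}^{p-2}\|u\|_{L^2(\G)}^2$ --- is more elementary and self-contained: it avoids rearrangement machinery entirely and yields an explicit, graph-independent constant $\mathcal{C}_p = 2^{p/2-1}$, while the rearrangement route yields the half-line constant and has the advantage of being the tool this literature needs anyway for comparing energies with solitons. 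Both proofs correctly isolate noncompactness as the essential hypothesis, and your observation that nonzero constants falsify the estimate on compact graphs is exactly the right sanity check. One small inaccuracy to fix: $u' \in L^2$ alone does \emph{not} force $u$ to have a limit at infinity (take $u(x) = \sin(\log(1+x))$ on $\R^+$, for which $u' \in L^2$ but $u$ oscillates); what you actually need, and have, is $(u^2)' = 2uu' \in L^1(P)$ by Cauchy--Schwarz applied to $u \in L^2(P)$ and $u' \in L^2(P)$, which gives $u^2$ a limit at infinity, and then $u \in L^2(P)$ forces that limit to be $0$. With that rewording --- and keeping, as you did, the injectivity of the path $P$, which is what justifies $\|\cdot\|_{L^2(P)} \le \|\cdot\|_{L^2(\G)}$ --- the proof is complete.
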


This inequality implies that every Palais--Smale sequence is bounded in $H$. Indeed (and more generally), from \eqref{GN},
\[
E_M(u) \ge \frac12\|u'\|_{L^2(\G)}^2  -\frac1p \mathcal{C}_p\mu^{\frac{p+2}{4}}\|u'\|_{L^2(\G)}^{\frac{p}{2}-1} \qquad\forall u\in M.
\]
Since $p\in(2,6)$,
\begin{equation}
\label{sublevel}
\|u\|^2 \leq C(1+E_M(u)) \qquad\forall u\in M,
\end{equation}
(for some $C$ depending on $p$ and $\mu$), showing that the sublevel sets of $E_M$ are bounded.

\begin{remark}
Since every Palais--Smale sequence $u_n$ is bounded, in view of Remark \ref{bdps},  the second condition in  Definition \ref{ps} can be written
more conveniently as $J(u_n) \to 0$ in $H'$. We will do so in the next proposition.
\end{remark}

\begin{proposition}
For every $\mu>0$, the functional $E_M$ satisfies (PS)$_c$ if and only if $c<0$.
\end{proposition}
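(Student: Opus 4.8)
The plan is to prove the two implications separately. Throughout, recall that by the Gagliardo--Nirenberg inequality \eqref{GN} and the resulting bound \eqref{sublevel}, every Palais--Smale sequence is bounded in $H$, so by Remark \ref{bdps} condition (ii) of Definition \ref{ps} may be replaced by $\|J(u_n)\|_{H'}\to 0$. For the sufficiency direction, let $(u_n)\subset M$ be a Palais--Smale sequence at level $c<0$. Being bounded, up to a subsequence $u_n\rightharpoonup u$ in $H$; since $\K$ is compact, $H^1(\K)$ embeds compactly into $L^2(\K)$ and $L^p(\K)$, so $u_n\to u$ strongly there. The first task is to control the multipliers $\lambda_n:=\lambda(u_n)$ of \eqref{lam}, which are bounded, hence (up to a further subsequence) $\lambda_n\to\lambda$. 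Writing $A_n=\|u_n'\|_{L^2(\G)}^2$ and combining the definition of $E_M$ with \eqref{lam} yields the algebraic identity
\[
E_M(u_n)=\Bigl(\tfrac12-\tfrac1p\Bigr)A_n-\tfrac1p\lambda_n\mu .
\]
Since $A_n\ge0$ and $p>2$, passing to the limit gives $-\tfrac1p\lambda\mu\le c<0$, whence $\lambda>0$. This is precisely where the sign hypothesis on $c$ is used.

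With $\lambda>0$ in hand, I would test $J(u_n)$ against $w_n:=u_n-u$. Since $\|J(u_n)\|_{H'}\to0$ and $(w_n)$ is bounded, $J(u_n)w_n\to0$. In the expansion
\[
J(u_n)w_n=\int_\G u_n'w_n'\dx-\int_\K|u_n|^{p-2}u_n w_n\dx+\lambda_n\int_\G u_n w_n\dx ,
\]
the nonlinear term tends to $0$ by H\"older's inequality and strong $L^p(\K)$ convergence, while $w_n\rightharpoonup0$ in $H$ turns the remaining two terms into $\|w_n'\|_{L^2(\G)}^2+\lambda\|w_n\|_{L^2(\G)}^2+o(1)$. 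Hence
\[
\|w_n'\|_{L^2(\G)}^2+\lambda\|w_n\|_{L^2(\G)}^2\to0 ,
\]
and since $\lambda>0$ both nonnegative terms vanish, so $u_n\to u$ strongly in $H$. In particular $\|u\|_{L^2(\G)}^2=\mu$, so $u\in M$ and (PS)$_c$ holds.

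For the necessity direction I would build, for each $c\ge0$, a Palais--Smale sequence at level $c$ that escapes to infinity. As $\G$ is noncompact it contains a half--line, which I identify with $[0,+\infty)$ and on which $\kappa\equiv0$. Set $\omega=\sqrt{2c/\mu}\ge0$; using a fixed smooth cut--off of a long interval $[n,n+T_n]$ with $T_n\to\infty$ and fixed transition width, let $u_n$ equal on this half--line a normalized truncated wave proportional to $\cos(\omega x)$, with $\|u_n\|_{L^2(\G)}^2=\mu$, and vanish elsewhere. For $n$ large the support avoids $\K$ and the vertex, so $E_M(u_n)=\tfrac12\|u_n'\|_{L^2(\G)}^2\to\tfrac12\omega^2\mu=c$ and $\lambda_n\to-\omega^2$. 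Since $\cos(\omega x)$ solves $-u''=\omega^2u$ exactly where the cut--off is constant, integration by parts (boundary terms vanishing) gives $J(u_n)v=\int_\G(-u_n''+\lambda_n u_n)v\dx$; the residual $-u_n''+\lambda_n u_n$ is supported in the transition region, with $L^2$ norm controlled by the amplitude (of order $T_n^{-1/2}$) plus $|\omega^2+\lambda_n|\sqrt\mu$, both tending to $0$. Thus $\|J(u_n)\|_{H'}\to0$ and $(u_n)$ is Palais--Smale at level $c$. However $u_n\rightharpoonup0$ as its mass runs off to infinity, while $\|u_n\|_{L^2(\G)}^2=\mu$, so a strongly convergent subsequence would converge to its weak limit $0\notin M$ --- a contradiction. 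Hence (PS)$_c$ fails.

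The delicate point is the sufficiency direction, and precisely the proof that the limiting multiplier $\lambda$ is strictly positive: it is this positivity that makes the quadratic form $\|w'\|_{L^2(\G)}^2+\lambda\|w\|_{L^2(\G)}^2$ coercive and lets one recover strong convergence on the unbounded edges, where no compactness is available; the hypothesis $c<0$ enters only here, but essentially. In the converse, the subtlety is that one must exhibit a genuine Palais--Smale sequence, not merely a sequence at level $c$: a plain translated bump sits at the right energy yet fails (ii), so the construction must be tuned to the continuous spectrum $[0,+\infty)$ of $-d^2/dx^2$ on the half--line (frequency $\omega^2=2c/\mu$), which is exactly what forces the residual $J(u_n)$ to vanish.
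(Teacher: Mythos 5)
Your proof is correct and follows essentially the same route as the paper: in the sufficiency direction your identity $E_M(u_n)=\bigl(\tfrac12-\tfrac1p\bigr)\|u_n'\|_{L^2(\G)}^2-\tfrac1p\lambda_n\mu$ and the resulting bound $-\tfrac{\lambda\mu}{p}\le c<0$ forcing $\lambda>0$, followed by testing $J(u_n)$ against $w_n=u_n-u$, reproduce the paper's argument exactly (the paper subtracts the auxiliary form $A(u)v=\int_\G u'v'\dx+\lambda\int_\G uv\dx$ before pairing with $u_n-u$, which is the same computation). For the failure of (PS)$_c$ at $c\ge 0$, the paper uses exact sine arcs $c_n\sin(ax)$ anchored at a vertex of the half--line, so that $J(u_n)$ reduces to vanishing boundary terms, together with a scaled bump $n^{-1/2}\xi(x/n)$ for $c=0$, whereas you use smoothly cut--off cosine plateaus escaping to infinity with residual estimates; this is only a cosmetic variant of the same resonance idea at frequency $\omega^2=2c/\mu$, and your estimates (including the non--transition part $(\omega^2+\lambda_n)u_n$, which you correctly account for even though it is not supported in the transition region) are sound.
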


\begin{proof}
We first show that the Palais--Smale condition does not hold at nonnegative levels. For every $c,\mu>0$, let $a>0$ and $c_n>0$ ($n\in \N$) be defined by
\[
a^2 = 2c/\mu, \qquad c_n^2 = \sqrt{2c\mu}/n\pi.
\]
Identify as usual one of the  half--lines of $\G$ with the interval $[0,+\infty)$,  and define the sequence
\[
u_n(x) =
\begin{cases}
c_n \sin (ax) & \text{if } x \in [0, 2n\pi/a] \\
0 & \text{otherwise on } \G.
\end{cases}
\]
Elementary computations show that $u_n$ is a Palais--Smale sequence for $E_M$ at level $c>0$ that has no converging subsequences in $M$, since $u_n\to 0$ uniformly on $\G$. Similarly, if $\xi\in C_0^1(\R^+)$ with $\|\xi\|_{L^2(\R^+)}^2=\mu>0$ and 
\[
u_n(x) =
\begin{cases}
n^{-1/2}\xi(x/n) & \text{if } x\in [0,+\infty)\\
0 & \text{otherwise on } \G,
\end{cases}
\]
it easy to see that $u_n$ is a Palais--Smale sequence for $E_M$ at level $0$ that again has no converging subsequences in $M$.

The situation changes at negative levels. Let $(u_n) \subset M$ be a Palais--Smale sequence at level $c<0$. Since it is bounded in $H$, up to subsequences, $u_n \rightharpoonup u$ in $H$ and $u_n\rightarrow u$ in $L_{loc}^q(\G)$, for all $q \in [1,\infty]$. As a consequence, $u_n \rightarrow u$ in $L^p(\K)$ and, by weak lower semicontinuity,
\[
 E(u)\leq\liminf_n E_M(u_n)=c<0.
\]
Since $E(0)=0$, this shows that $u\not\equiv0$. The boundedness of $u_n$ also implies, via \eqref{lam}, that $\lambda_n$ is bounded; therefore, up to subsequences, $\lambda_n \to \lambda$ as $n\to \infty$. By \eqref{juu}, $J(u_n)u_n =0$ for every $n$, so that
\begin{align*}
c+o(1) & = E_M(u_n)-\frac1pJ(u_n) u_n=\left(\frac12-\frac1p\right)\int_{\G}|u_n'|^2\dx-\frac{\mu}{p}\lambda_n \\
& \ge -\frac{\mu}{p}\lambda_n =-\frac{\mu}{p}\lambda + o(1).
\end{align*}
Hence, $\lambda >0$. Let now $A(u):H\to \R$ be defined by 
\[
A(u)v = \int_\G u'v'\dx + \lambda\int_\G u v\dx.
\]
By weak convergence of $u_n$ we have $A(u)(u_n-u) \to 0$ as $n\to\infty$. Therefore
\begin{align*}
o(1) & = \left(J(u_n)-A(u)\right)(u_n-u)\\ 
& = \int_\G |u_n' -u'|^2\dx-\int_\K |u_n|^{p-2}\,u_n (u_n-u)\dx +\lambda_n\int_\G u_n(u_n-u)\dx\\
& \phantom{aa} -\lambda \int_\G u(u_n-u)\dx \\
& = \int_\G |u_n' -u'|^2\dx +\lambda \int_\G |u_n-u|^2\dx +o(1),
\end{align*}
by strong convergence of $u_n$ to $u$ in $L^p(\K)$ and convergence of $\lambda_n$ to $\lambda$.
Since $\lambda$ is positive, this shows that $u_n \to u$ in $M$. 
\end{proof}

\section{Proof of the main result}\label{proofs}

We are going to prove Theorem \ref{main} by means of a classical result in Critical Point Theory (e.g. Proposition 10.8 in  \cite{Ambrosetti}).  First, we recall the notion of {\em genus} due to M. A. Krasnosel'skii (for a complete discussion, see \cite{Krasnoselski,Rabinowitz}).

\begin{definition}
Let $\mathcal{A}$ be the family of sets $A\subset H\backslash\{0\}$ such that $A$ is closed and symmetric (namely, $u\in A \Rightarrow -u\in A$). For every $A\in\mathcal{A}$, the {\em genus} of $A$ is the natural number defined by
\begin{equation}
\label{genus}
\gamma(A):=\min\{n\in\N:\exists\varphi:A\rightarrow\R^n\backslash\{0\},\, \varphi\mbox{ continuous and odd}\}.
\end{equation}
If no $\varphi$ as in \eqref{genus} exists, one sets $\gamma(A) = +\infty$.
\end{definition}

Given $\mu>0$, let $M$ be as in \eqref{manif}. Define, for $j\in \N$,
\[
\Gamma_j := \{A \subset M : A\in {\mathcal A},\; A \text{ is compact and } \gamma(A) \ge j\}
\] 
and
\[
c_j := \inf_{A\in \Gamma_j} \max_{u\in A} E_M(u).
\]

Note that $-\infty < c_1 \le c_2\le \dots$, the first inequality following from the fact that $E_M$ is bounded below by \eqref{sublevel} and the others by definition of the $\Gamma_j$'s. By the quoted result in \cite{Ambrosetti}, each $c_j$ is a critical level of $E_M$ provided that the Palais--Smale condition holds at $c_j$. Also, if $c_j=c_{j+1}=\dots = c_{j+q}$ for some $q\ge 1$, then the set $K=\{u \in M : E_M(u) = c_j,\; E'_M(u)=0\}$ satisfies $\gamma(K) \ge q+1$ and, in particular,  is infinite.

Now, since the functional $E_M$ satisfies the (PS)$_c$ conditions only if $c<0$, in order to conclude we have to show that some of the min--max classes $\Gamma_j$ work at negative levels, and this is where the choice of $\mu$ will become essential. 

\medskip
We start with a simple lemma, whose proof is omitted.

\begin{lemma}
\label{sk}
For every $k\in \N$, let $S^{k-1} = \{\theta \in \R^k : |\theta| =1\}$. Then,
\[
\min_{\theta\in S^{k-1}} \sum_{j=1}^k|\theta_j|^p = k^{1-p/2} \qquad \text{for every }\quad p\ge 2.
\]
\end{lemma}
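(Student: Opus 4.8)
The plan is to recast this constrained minimization as a direct instance of Jensen's inequality. Writing $t_j = \theta_j^2 \ge 0$, the constraint $\theta \in S^{k-1}$ (i.e. $\sum_{j=1}^k \theta_j^2 = 1$) becomes $\sum_{j=1}^k t_j = 1$, while the quantity to be minimized reads $\sum_{j=1}^k |\theta_j|^p = \sum_{j=1}^k (\theta_j^2)^{p/2} = \sum_{j=1}^k t_j^{p/2}$. Since $p \ge 2$, the map $s \mapsto s^{p/2}$ is convex on $[0,\infty)$, and this is precisely the feature that makes the argument run in the correct direction.

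Applying the discrete Jensen inequality to the convex function $s \mapsto s^{p/2}$ with the uniform weights $1/k$, I would obtain
\[
\frac1k \sum_{j=1}^k t_j^{p/2} \ge \Bigl( \frac1k \sum_{j=1}^k t_j \Bigr)^{p/2} = k^{-p/2},
\]
using $\sum_{j=1}^k t_j = 1$ in the last equality. Multiplying through by $k$ gives $\sum_{j=1}^k t_j^{p/2} \ge k^{1-p/2}$, that is $\sum_{j=1}^k |\theta_j|^p \ge k^{1-p/2}$ for every $\theta \in S^{k-1}$, which is the desired lower bound.

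It then remains to check that the bound is attained, so that the infimum is genuinely a minimum equal to $k^{1-p/2}$. Equality in Jensen holds exactly when all the $t_j$ coincide, namely $t_j = 1/k$ for each $j$; this corresponds to the point $\theta = (k^{-1/2},\dots,k^{-1/2})$ (and its sign variants), which lies on $S^{k-1}$ and yields $\sum_{j=1}^k |\theta_j|^p = k\cdot k^{-p/2} = k^{1-p/2}$. As $S^{k-1}$ is compact and the objective continuous, the minimum is attained and equals this value.

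There is essentially no serious obstacle here: the only point demanding care is the \emph{direction} of the inequality. Because $p \ge 2$ forces $p/2 \ge 1$ and hence convexity, Jensen produces a lower bound, which is exactly what a minimization requires; for $p<2$ the inequality would reverse and the equal-coordinates point would instead be a maximizer. One could alternatively run Lagrange multipliers, but then one would have to distinguish the interior critical point (the equal-coordinates minimizer) from the coordinate-axis points $\theta = \pm e_j$, which give the value $1$ and are maxima when $p>2$; the convexity route sidesteps this case analysis altogether.
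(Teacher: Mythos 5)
Your proof is correct. Note that the paper itself gives no argument here --- the lemma is introduced with ``whose proof is omitted'' --- so there is no authorial proof to compare against; your Jensen/convexity argument (substituting $t_j=\theta_j^2$ so that the constraint becomes $\sum_j t_j=1$ and the objective becomes $\sum_j t_j^{p/2}$ with $p/2\ge 1$) is exactly the kind of standard one-paragraph verification the authors presumably had in mind, and it is complete: the lower bound, the attainment at $\theta=(k^{-1/2},\dots,k^{-1/2})$, and the direction of the inequality are all handled properly. One microscopic quibble: for $p=2$ the map $s\mapsto s^{p/2}$ is affine, so your claim that Jensen's equality holds \emph{exactly} when all $t_j$ coincide fails in that boundary case (equality then holds for every $\theta$); this does not affect the stated value of the minimum, since the objective is identically $1=k^{1-p/2}$ when $p=2$, but strictly speaking the uniqueness of the equality case should be asserted only for $p>2$.
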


We are now ready to carry out the proof of the main result.

\begin{proof}[Proof of Theorem~\ref{main}]
Let $e$ be any edge of $\K$, identified with the interval $[0,L]$. For $k\in\N$, let $\mu_{L/k}$ be the number provided by Proposition \ref{approx} (with the choice $\ell=L/k$), and define
\begin{equation}
\label{muk}
\mu_k = k\mu_{L/k}.
\end{equation}
Now, let $M$ be the manifold \eqref{manif} corresponding to any choice of $\mu\ge \mu_k$ and, accordingly,
let $E_M$ be the functional $E$ restricted to $M$. Since $\mu/k\ge \mu_k/k = \mu_{L/k}$, Proposition \ref{approx} yields a function $\psi\in H^1(\R)$ such that
\begin{itemize}
\item[(i)] $\|\psi\|_{L^2(\R)}^2 = \mu/k$,
\item[(ii)] ${\rm supp }\,\psi \subset [0, L/k]$,
\item[(iii)] $\mathcal{E}(\psi) \le \mathcal{E}(\varphi_{\mu/k}) + r_{L/k}(\mu/k)<0$.
\end{itemize}
We now construct, with the help of $\psi$, a compact symmetric set $A\subset M$ such that $\gamma(A)\ge k$ and $\max_{u\in A} E_M(u) <0$ (this will show that for all $j\le k$, $\Gamma_j \ne \emptyset$ and  $c_j<0$). For every $j =1,\dots, k$, define $\psi_j : \G \to \R$ as
\[
\psi_j(x) =
\begin{cases} \psi(x-(j-1)L/k) & \text {if } x\in [(j-1)L/k,jL/k] \\
0 & \text{otherwise on } \G.
\end{cases}
\]
Each function $\psi_j$ is therefore supported in the subset of the edge $e$ identified with the subinterval $[(j-1)L/k,jL/k]$. Note that $\psi_i(x)\psi_j(x) = 0$ for every $x\in \G$ and every $i\ne j$. Clearly, $\psi_j \in H$ and 
\[
\|\psi_j\|_{L^2(\G)}^2 =  \|\psi\|_{L^2(\R)}^2 = \mu/k, \qquad  j=1,\dots,k.
\]
Consider the map $h : S^{k-1}\rightarrow H$ defined by
\begin{equation}
\label{acca}
h(\theta)(x) = \sqrt{k} \sum_{j=1}^k \theta_j\psi_j(x).
\end{equation}
Obviously, $h$ is continuous, odd, $0\not\in h(S^{k-1})$, and $h(\theta)$ is supported in $[0,L]$. Moreover, for every $\theta \in S^{k-1}$,
\begin{align*}
\int_\G |h(\theta)(x)|^2\dx &= \int_0^L  |h(\theta)(x)|^2\dx = k \sum_{j=1}^k \theta_j^2 \int_{(j-1)L/k}^{jL/k}|\psi_j(x)|^2\dx \\
& =  k \sum_{j=1}^k \theta_j^2 \int_{0}^{L/k}|\psi(x)|^2\dx = k\|\psi\|_{L^2(\R)}^2\sum_{j=1}^k \theta_j^2 = \mu
\end{align*}
by (i). Therefore $h$ maps $S^{k-1}$ into $M$. Similarly,
\[
\int_\G |h(\theta)'(x)|^2\dx = k\int_{0}^{L/k}|\psi'(x)|^2\dx \sum_{j=1}^k \theta_j^2 = k\int_\R |\psi'(x)|^2\dx
\]
and
\begin{align*}
\int_\K |h(\theta)(x)|^p\dx   =  k^{p/2} \int_\R |\psi(x)|^p\dx\sum_{j=1}^k |\theta_j|^p \ge k \int_\R |\psi(x)|^p\dx
\end{align*}
by Lemma \ref{sk}. Therefore, for every $\theta\in S^{k-1}$,
\begin{align}
\label{neglev}
E_M(h(\theta)) &= \frac12 \int_\G |h(\theta)'(x)|^2\dx - \frac1p \int_\K |h(\theta)(x)|^p\dx \nonumber \\
& \le \frac{k}{2} \int_\R |\psi'(x)|^2\dx   -\frac{k}{p}  \int_\R |\psi(x)|^p\dx \nonumber \\ 
& = k\mathcal{E}(\psi) \le  k(\mathcal{E}(\varphi_{\mu/k})+r_{L/k}(\mu/k))<0,
\end{align}
where the last inequalities follow from (iii). Set now $A = h(S^{k-1})$. Then $A$ is compact, symmetric and $A\subset M$. By well--known properties of the genus (see e.g. Chapter 7 in \cite{Rabinowitz})
\[  
\gamma(A) \ge \gamma(S^{k-1})=k,
\]
so that $A\in \Gamma_k$. By \eqref{neglev}, $\max_{u\in A} E_M(u) <0$, showing that
\[
c_k = \inf_{A\in \Gamma_k} \max_{u\in A} E_M(u) <0.
\]
Thus $E_M$ satisfies the Palais--Smale conditions at all levels $c_1 \le\dots\le c_k$. We then apply Proposition 10.8 in \cite{Ambrosetti} and find,
for every $j=1,\dots,k$, at least one pair of critical points $\pm u_j$ for $E_M$ at level $c_j$. If two or more of the $c_j$'s coincide, by the quoted result we deduce that $E_M$ has infinitely many critical points.  

Next we show that the estimates \eqref{bslevel} are satisfied. The computations carried out so far for $c_k$ can be repeated, exactly in the same way, for each $c_j$ with $j< k$. Hence, working  with  any $S^{j-1}$, 
\eqref{neglev} reads
\[
E_M(h(\theta)) \le  j(\mathcal{E}(\varphi_{\mu/j})+r_{L/j}(\mu/j)), \qquad j=1,\,\dots, k,
\]
where $h:S^{j-1}\to H$ is defined according to \eqref{acca}. By \eqref{levsol},
\[
\mathcal{E}(\varphi_{\mu/j}) = \frac1{j^{2\beta+1}}\mathcal{E}(\varphi_\mu),
\]
so that, for every $\theta\in S^{j-1}$,
\[
E_M(h(\theta)) \le  \frac1{j^{2\beta}}\mathcal{E}(\varphi_\mu) + jr_{L/j}(\mu/j)).
\]
By Remark \ref{decrease}
\[
jr_{L/j}(\mu/j)) \le kr_{L/k}(\mu/k)):= \sigma_k(\mu) \qquad  j=1,\,\dots, k,
\]
hence we obtain
\[
c_j \le  \frac1{j^{2\beta}}\mathcal{E}(\varphi_\mu)  + \sigma_k(\mu), \qquad  j=1,\,\dots, k,
\]
and also the required level estimate is proved.

Finally, the sign of the Lagrange multiplier $\lambda_j$ associated to $u_j$ can be immediately deduced from the fact that the energy of $u_j$ is negative via the definition of $\lambda_j$:
\[
\lambda\mu = \int_\K |u_j|^p\dx -\int_\G |u_j'|^2\dx > \frac2p\int_\K |u_j|^p\dx  -\int_\G |u_j'|^2\dx = -2E_M(u_j)>0.
\]
\end{proof}

A few comments are in order. First, we note that the mass threshold $\mu_k$ defined in \eqref{muk} is a decreasing function of the length $L$
of the edge chosen for the construction carried out in the proof. Therefore, in order to minimize $\mu_k$, and obtain the result
for masses as small as possible, it is convenient to choose the longest edge in the compact core of the graph. Alternatively, with a minor modification of the argument,
one could also ``place'' the $k$ copies of the cut--off soliton used for the construction of the set $A$ on {\em different} edges of the compact core, when this is preferable in order to minimize $\mu_k$.

These considerations also show that there is no need to require that the nonlinearity be present on  the whole  compact core. A single edge is sufficient in order to obtain the same result, as anticipated in Remark \ref{single}.

\end{document}